\theoremstyle{plain}
\newtheorem{theorem}{Theorem}[section]
\newtheorem{proposition}[theorem]{Proposition}
\newtheorem{lemma}[theorem]{Lemma}
\newtheorem{cor}[theorem]{Corollary}
\theoremstyle{definition}
\newtheorem{remark}[theorem]{Remark}
\title{On the Law of Large Numbers for Discrete Fourier Transform}
\author{Na Zhang}
\affil{ University of Cincinnati, PO Box 211005, Cincinnati, OH 45221-0025\\
Email: zhangn4@mail.uc.edu}
\date{}
\begin{document}

\maketitle

\begin{abstract}
We establish the rate of convergence in the strong law of large numbers of discrete Fourier Transform of the identically distributed random variables with finite $pth$ moment where $1<p<2$. Moreover, under an even weaker condition, i.e. $P(|X_n|\geq x)\leq P(|X_1|\geq x)$ for all $x\geq 0$ and the random variable $X_1$ has finite  $pth$ moment,  our results still hold.
\end{abstract}


\section{Introduction and Results}

The law of large numbers is valid for pairwise independent random variables, result due to Etemadi (1981). This is a surprising result since a sequence of pairwise independent identically distributed random varibles may not be ergodic. A way to look into the speed of convergence of this result when the variables have finite moments of order $r$, $1<r<2$, is provided by Baum and Katz (1964) in the i.i.d case. By carefully examining the proof in George Stoica (2011), we notice that the proof can be adapted to centered pairwise independent random variables and we can formulate the following result.

\begin{proposition}
Let $(X_n)_{n\geq 1}$ be pairwise independent with the same distribution.\\

(a) Assume $X_n\in L^1$, then 
$$\frac{S_n}{n}\rightarrow EX_1, \ \ P-\text{a.s.} \ \text{as} \ n\to\infty,$$
where $S_n=\sum_{k=1}^{n}X_k$.\\

(b) Assume $X_n\in L^p$, $1<p<2$, then for any $1\leq r\leq p$, 
$$\sum_{n=1}^{\infty} n^{p/r-2} P\left(|S_n|>\epsilon n^{1/r}\right) <\infty.$$
\end{proposition}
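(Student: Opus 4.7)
Part (a) is Etemadi's (1981) strong law, which only uses pairwise independence and identical distribution, so I would simply invoke it. The substance is part (b). My plan is to mimic Stoica's Baum--Katz style argument, replacing his use of full independence by pairwise independence: pairwise independence is just strong enough to preserve the Chebyshev bound $\mathrm{Var}(\sum Y_k)=\sum\mathrm{Var}(Y_k)$ on a centered truncated sum, and that is the only place in the proof where independence is invoked.

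Fix $\epsilon>0$ and $1\le r\le p$, and assume the $X_n$ are centered (as the phrase ``centered pairwise independent'' in the paper signals; in any case $EX_1=0$ is needed for the statement to be correct when $r=1$). Truncate at level $n^{1/r}$ by setting $Y_k^{(n)} := X_k \mathbf{1}_{\{|X_k|\le n^{1/r}\}}$ and split
\[
P\bigl(|S_n| > \epsilon n^{1/r}\bigr) \le n\, P\bigl(|X_1| > n^{1/r}\bigr) + P\Bigl(\Bigl|\sum_{k=1}^n Y_k^{(n)}\Bigr| > \epsilon n^{1/r}\Bigr).
\]
A routine layer-cake computation gives $\sum_n n^{p/r-2}\cdot n\,P(|X_1|>n^{1/r}) \le C\,E|X_1|^p <\infty$, so the first summand contributes a convergent series.

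For the second summand I would first show the centering $n|EY_1^{(n)}|=o(n^{1/r})$, using $EX_1=0$ together with the pointwise inequality $n^{1-1/r}|X_1|\mathbf{1}_{\{|X_1|>n^{1/r}\}}\le |X_1|^p$ (valid because $r\le p$), so that for large $n$ this centering is absorbed into $\epsilon n^{1/r}/2$; then Chebyshev gives
\[
P\Bigl(\Bigl|\sum_k \bigl(Y_k^{(n)}-EY_k^{(n)}\bigr)\Bigr| > \tfrac{\epsilon}{2}\, n^{1/r}\Bigr) \le \frac{C\,n\,E\bigl[X_1^2\mathbf{1}_{\{|X_1|\le n^{1/r}\}}\bigr]}{n^{2/r}}.
\]
Multiplying by $n^{p/r-2}$ and interchanging sum and expectation, what remains is to verify $\sum_n n^{(p-2)/r-1}\,E\bigl[X_1^2\mathbf{1}_{\{|X_1|\le n^{1/r}\}}\bigr]\le C\,E|X_1|^p$; this is immediate since the exponent $(p-2)/r-1<-1$ makes $\sum_{n\ge |X_1|^r} n^{(p-2)/r-1}\asymp |X_1|^{p-2}$, whereupon $X_1^2\cdot|X_1|^{p-2}=|X_1|^p$. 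The main obstacle is the boundary case $r=p$: the crude bound $|EY_1^{(n)}|\le E[|X_1|\mathbf{1}_{\{|X_1|>n^{1/r}\}}]$ is only $O(n^{1/r-1})$, so dominated convergence (not just Markov) is required to upgrade $O$ to $o$; every other step is routine bookkeeping with truncated moments.
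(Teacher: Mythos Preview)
Your proposal is correct and matches the paper's approach: the paper does not actually prove this proposition but simply attributes (a) to Etemadi (1981) and states that (b) follows by adapting the argument in Stoica (2011) to the centered, pairwise independent setting. Your sketch carries out precisely that adaptation---truncation at level $n^{1/r}$, the tail sum $\sum n^{p/r-1}P(|X_1|>n^{1/r})$, the dominated-convergence centering argument, and Chebyshev via pairwise orthogonality---and each step is sound, so you have supplied in detail what the paper only asserts.
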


Furthermore, $\displaystyle \frac{S_n-ES_n}{n^{1/p}}\rightarrow 0$ by Korchevsky (2015).\\

The goal of our note is to study the law of large numbers for the discrete Fourier Transform of a sequence of identically distributed random variables and to show that, from some point of view, the variables have similar properties with pairwise independent random variables.

Let $(X_n)_{n\geq 1}$ denote a sequence of identically distributed real valued random variables on a probability space  $(\Omega, \mathcal{F}, P)$. For $-\pi\leq t< \pi$, define the discrete Fourier transform 
\begin{equation}\label{eq1}
S_n(t)=\sum_{k=1}^{n} e^{ikt}X_k.
\end{equation}

We shall establish an anologue of the Baum and Katz (1964) result for the discrete Fourier Transform.\\

Our results are the following:

\begin{theorem}\label{theorem1}
If $(X_n)_{n\geq 1}$ has finite first moment, then for almost all $t\in[-\pi,\pi)$,
$$\lim_{n\to\infty} \frac{S_n(t)}{n}=0, \ \ \ P-\text{a.s}.$$
\end{theorem}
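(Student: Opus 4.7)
The strategy is to truncate the summands so that the centered truncated pieces have Fourier coefficients $W_k/k$ that are square-summable, and then to reduce the Ces\`aro-type limit to the convergence of an associated random Fourier series through Kronecker's lemma. The almost-everywhere convergence of the Fourier series will be supplied pathwise by Carleson's theorem.

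First I would write $X_k = Y_k + Z_k$ with $Y_k = X_k \mathbb{1}_{\{|X_k|\le k\}}$. Identical distribution and $E|X_1|<\infty$ give
\[
\sum_{k\ge 1} P(Z_k\neq 0)=\sum_{k\ge 1} P(|X_1|>k)\le E|X_1|<\infty,
\]
so Borel--Cantelli forces $Z_k=0$ eventually almost surely and hence $\tfrac{1}{n}\sum_{k=1}^n e^{ikt}Z_k\to 0$ almost surely for every $t$. Next, the deterministic means $EY_k=E[X_1\mathbb{1}_{\{|X_1|\le k\}}]\to EX_1$ by dominated convergence; since for $t\neq 0$ the Dirichlet sum $\sum_{k=1}^n e^{ikt}$ is bounded by $|1-e^{it}|^{-1}$, the splitting $EY_k=EX_1+(EY_k-EX_1)$ followed by the Ces\`aro theorem yields $\tfrac{1}{n}\sum_{k=1}^n e^{ikt}EY_k\to 0$ for every $t\neq 0$.

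It then remains to handle the centered truncated part $W_k:=Y_k-EY_k$. Kronecker's lemma (with weights $b_k=k$) reduces $\tfrac{1}{n}\sum_{k=1}^n e^{ikt}W_k\to 0$ to the convergence of $\sum_{k\ge 1}(W_k/k)e^{ikt}$. Using the orthogonality $\int_{-\pi}^{\pi}e^{i(j-k)t}\,dt=2\pi\delta_{jk}$ together with $EW_k^2\le E[X_1^2\mathbb{1}_{\{|X_1|\le k\}}]$, I would compute
\[
E\sum_{k\ge 1}\frac{W_k^2}{k^2}\le E\!\left[X_1^2\sum_{k\ge\max(|X_1|,1)}\frac{1}{k^2}\right]\le C\,E|X_1|<\infty,
\]
so $(W_k(\omega)/k)_{k\ge 1}\in\ell^2$ for almost every $\omega$. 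For such $\omega$ the series $\sum_k(W_k/k)e^{ikt}$ is the Fourier series of an $L^2[-\pi,\pi]$ function, and Carleson's theorem gives its pointwise convergence at almost every $t$. Applying Kronecker's lemma pointwise in $t$ and reassembling the three pieces finishes the argument, where a final Fubini swap converts ``for a.s.\ $\omega$, for a.e.\ $t$'' into the stated ``for a.e.\ $t$, $P$-a.s.''.

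The step I expect to be the main obstacle is the very last one: passing from $\ell^2$-summability of the coefficients to pointwise convergence of the partial sums. Carleson's theorem handles this cleanly but is heavy machinery. A more elementary route via a dyadic maximal inequality combined with the Rademacher--Menshov theorem produces an extra $(\log k)^2$ factor, which would force the strictly stronger hypothesis $E[|X_1|(\log|X_1|)^2]<\infty$; for the sharp $L^1$ statement, an appeal to Carleson (or a comparably deep harmonic-analytic result) therefore seems essentially unavoidable.
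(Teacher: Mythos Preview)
Your proposal is correct and follows essentially the same route as the paper: truncate at level $k$, use Borel--Cantelli for the tails, show the truncated coefficients divided by $k$ lie in $\ell^2$ almost surely, invoke Carleson's theorem pathwise, apply Kronecker's lemma, and finish with a Fubini swap. The only difference is that you center the truncated variables and handle the deterministic sequence $(EY_k)$ separately via the bounded Dirichlet kernel; the paper skips this and applies Carleson directly to the uncentered $Y_k$, since $\sum_k EY_k^2/k^2\le 4E|X_1|<\infty$ already forces $\sum_k Y_k^2/k^2<\infty$ a.s.\ without any centering. Your extra step is harmless but unnecessary, and the mention of the orthogonality relation $\int e^{i(j-k)t}\,dt=2\pi\delta_{jk}$ plays no role in the $\ell^2$ bound you actually use.
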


 The following theorem describes the rate of convergence in the strong law of large numbers:

\begin{theorem}\label{theorem2}

Let  $1<p<2$, $1\leq r\leq p$. If $(X_n)_{n\geq1}$ has finite $pth$ moment, then  for every $\epsilon >0$ and for almost all $t\in[-\pi,\pi)$,  
\begin{equation}\label{eq2}
\sum_{n=1}^{\infty} n^{p/r-2} P[\max_{1\leq k\leq n}|S_k(t) |>\epsilon n^{1/r}]<\infty.
\end{equation}
\end{theorem}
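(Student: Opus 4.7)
The plan is to establish the almost-sure statement in $t$ by integrating over $t\in[-\pi,\pi)$ and applying Tonelli's theorem. For each fixed $\epsilon>0$, it suffices to prove
\[
\sum_{n=1}^{\infty}n^{p/r-2}\int_{-\pi}^{\pi}P\!\left[\max_{1\le k\le n}|S_k(t)|>\epsilon n^{1/r}\right]dt<\infty,
\]
since then the inner series is finite for Lebesgue-a.e.\ $t$; taking the union of the associated null sets over rational $\epsilon>0$ yields a single null set outside of which \eqref{eq2} holds simultaneously for every $\epsilon>0$.

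Next I would perform the standard Baum--Katz truncation: for each $n$, set $Y_k^{(n)}=X_k\mathbf 1_{\{|X_k|\le n^{1/r}\}}$ and $Z_k^{(n)}=X_k-Y_k^{(n)}$, inducing the splitting $S_k(t)=T_k(t)+U_k(t)$. The tail part is essentially free: $\max_{k\le n}|U_k(t)|\le\sum_{j\le n}|Z_j^{(n)}|$ does not depend on $t$ and vanishes off the event $\{\exists j\le n:|X_j|>n^{1/r}\}$, so a union bound and identical distribution give $\int P[\max_k|U_k(t)|>\epsilon n^{1/r}/2]\,dt\le 2\pi n\,P(|X_1|>n^{1/r})$, and the resulting series $\sum_n n^{p/r-1}P(|X_1|^r>n)$ is bounded by $C\,E|X_1|^p<\infty$ via the standard layer-cake identity.

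For the bounded part $T_k(t)$, the key input is an $L^2$ maximal inequality for Fourier partial sums (Carleson's theorem), which gives
\[
\int_{-\pi}^{\pi}\max_{1\le k\le n}|T_k(t)|^2\,dt\le C\int_{-\pi}^{\pi}|T_n(t)|^2\,dt=2\pi C\sum_{k=1}^{n}|Y_k^{(n)}|^2,
\]
the final equality being Parseval. Taking expectations, applying Chebyshev in $t$ and Fubini in $(P,dt)$, and summing $n^{p/r-2}$ times the resulting estimate yields, after interchanging sum and expectation,
\[
\sum_{n}n^{(p-2)/r-1}E\!\left[X_1^2\mathbf 1_{\{|X_1|^r\le n\}}\right]=E\!\Bigl[X_1^2\sum_{n\ge|X_1|^r}n^{(p-2)/r-1}\Bigr]\le C'E|X_1|^p,
\]
since $(p-2)/r-1<-1$ makes the inner tail sum comparable to $|X_1|^{p-2}$.

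The substantive difficulty lies entirely in securing the log-free maximal inequality used above. Replacing Carleson's theorem by the elementary Rademacher--Menshov bound introduces a $(\log n)^2$ factor into the $L^2$ maximal estimate, and propagating it through the layer-cake computation forces the strictly stronger moment hypothesis $E\!\left[|X_1|^p(\log|X_1|)^2\right]<\infty$. Carleson's $L^2$-boundedness of the partial-sum maximal operator for the trigonometric system is thus the indispensable ingredient that lets the argument close under the stated assumption $E|X_1|^p<\infty$; everything else is routine truncation bookkeeping.
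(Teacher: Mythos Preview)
Your proposal is correct and follows essentially the same route as the paper: truncate at level $n^{1/r}$, control the bounded part via the Carleson--Hunt $L^2$ maximal inequality for Fourier partial sums (the paper cites the Hunt--Young formulation of the same bound), control the tail part by an elementary moment estimate, and pass from the $dt$-integrated series to the almost-every-$t$ statement via Tonelli. The only cosmetic difference is in the tail piece, where you use a union bound on $\{\exists j\le n:|X_j|>n^{1/r}\}$ while the paper applies a first-moment Markov inequality to $\sum_{j\le n}|Z_j^{(n)}|$; both close under $E|X_1|^p<\infty$.
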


\begin{cor}\label{cor1}
Under the assumption of Theorem \ref{theorem2}, for almost all $t\in[\pi,\pi)$,
$$\lim_{n\to\infty}\frac{S_n(t)}{n^{1/p}} = 0, \ \ \ P-\text{a.s.}$$
\end{cor}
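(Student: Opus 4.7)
The plan is to reduce the $P$-a.s.\ convergence to a Borel--Cantelli statement along a geometric subsequence, using Theorem~\ref{theorem2} with the distinguished exponent $r=p$. With this choice $n^{p/r-2}=n^{-1}$, so \eqref{eq2} becomes, for each fixed $\epsilon>0$ and every $t$ in a set $A_\epsilon\subset[-\pi,\pi)$ of full Lebesgue measure,
$$\sum_{n=1}^{\infty} \frac{1}{n}\,P\!\left[\max_{1\leq k\leq n}|S_k(t)|>\epsilon n^{1/p}\right]<\infty.$$

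Next, I would collapse this series onto the dyadic blocks $\{2^j\le n<2^{j+1}\}$. On each such block the summand $1/n$ has total mass bounded below by a positive absolute constant, while $n^{1/p}\le 2^{(j+1)/p}$ together with the monotonicity $\max_{k\le n}|S_k|\ge\max_{k\le 2^j}|S_k|$ yields the uniform lower bound
$$P\!\left[\max_{1\leq k\leq 2^j}|S_k(t)|>\epsilon\cdot 2^{1/p}\cdot 2^{j/p}\right].$$
Summability of these probabilities, combined with the first Borel--Cantelli lemma, ensures that for $P$-almost every $\omega$ only finitely many $j$ satisfy $\max_{k\le 2^j}|S_k(t,\omega)|>\epsilon\cdot 2^{1/p}\cdot 2^{j/p}$.

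The third step is the standard interpolation between dyadic times. Given $2^j\le n<2^{j+1}$, the maximal form yields
$$\frac{|S_n(t,\omega)|}{n^{1/p}}\;\le\;\frac{\max_{k\le 2^{j+1}}|S_k(t,\omega)|}{2^{j/p}}\;\le\;\epsilon\cdot 2^{2/p}$$
for all sufficiently large $j$, so $\limsup_n |S_n(t)|/n^{1/p}\le \epsilon\cdot 2^{2/p}$ holds $P$-a.s.\ for each $t\in A_\epsilon$. To upgrade this $\limsup$ bound to an actual limit equal to $0$ while keeping the quantifiers in the stated order ``for almost every $t$, then $P$-a.s.,'' I would run the argument with the countable family $\epsilon_m=1/m$, intersect the full-measure sets $A_{\epsilon_m}\subset[-\pi,\pi)$ into a single full-measure set $A$, and, for each $t\in A$, intersect the corresponding countably many almost-sure events in $\Omega$.

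The argument is essentially soft once Theorem~\ref{theorem2} is in hand; the only point that requires care is the order of the two ``almost surely'' quantifiers (Lebesgue in $t$ outside, $P$ in $\omega$ inside), which is why the countable exhaustion over $\epsilon=1/m$ is used rather than a single-$\epsilon$ statement—otherwise the exceptional set of $t$'s could in principle depend on $\epsilon$.
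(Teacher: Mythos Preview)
Your proof is correct and follows essentially the same route as the paper: set $r=p$ in Theorem~\ref{theorem2}, pass to the dyadic subsequence $2^j$, and apply Borel--Cantelli together with the maximal form of the partial sums. The paper compresses the last two steps into a citation of Dedecker--Merlev\`ede, whereas you spell them out; you are also more careful than the paper in handling the dependence of the Lebesgue-null exceptional set of $t$'s on $\epsilon$ via the countable exhaustion $\epsilon_m=1/m$, a point the paper leaves implicit.
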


\begin{remark}\label{remark1}
Theorem \ref{theorem1} still holds if we replace the identically distributed condition  with $P(|X_n|\geq x)\leq P(|X_1|\geq x)$ for all $x\geq 0$ and $E|X_1|<\infty$;  Theorem \ref{theorem2} and Corollary \ref{cor1} are also true if we only have $P(|X_n|\geq x)\leq P(|X_1|\geq x)$ for all $x\geq 0$ and $E|X_1|^p<\infty$ where $1<p<2$.
\end{remark}

\section{Proofs}

Throughout this whole paper, $C>0$ denotes a generic constant  which may take different values from line to line. 

In order to prove Theorem \ref{theorem1}, we shall establish one preparatory lemma first. We begin by a truncation argument.

\begin{lemma}\label{lemma1}
Let $Y_k=X_kI\{|X_k|\leq k\}$ and $S_n^*(t)=\sum_{k=0}^{n}e^{ikt}Y_k$. Then, for all t in $[-\pi,\pi)$,
$$\lim_{n\to\infty}\left|\frac{1}{n}S_n(t)-\frac{1}{n}S_n^*(t)\right|=0, \ \ \ P-\text{a.s.} $$ 
\end{lemma}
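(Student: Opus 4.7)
\textbf{Proof plan for Lemma \ref{lemma1}.}

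The plan is to reduce the claim to a standard tail–sum argument via Borel--Cantelli, exploiting the key fact that the Fourier weights $e^{ikt}$ have modulus one and therefore contribute no help \emph{and} no harm. First I would write
\[
\frac{S_n(t)}{n}-\frac{S_n^*(t)}{n}=\frac{1}{n}\sum_{k=1}^{n} e^{ikt}\bigl(X_k-Y_k\bigr)=\frac{1}{n}\sum_{k=1}^{n} e^{ikt}X_k\,I\{|X_k|>k\},
\]
and then use the trivial bound $|e^{ikt}|=1$ to majorize the absolute value by the $t$-free quantity
\[
R_n:=\frac{1}{n}\sum_{k=1}^{n}|X_k|\,I\{|X_k|>k\}.
\]
Thus it suffices to prove $R_n\to 0$ almost surely; once this is done the conclusion holds uniformly in $t$, which is stronger than what the lemma requires.

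Next I would control the sequence of events $A_k:=\{|X_k|>k\}$. Using the identically distributed assumption (or, more generally, the tail-domination hypothesis in Remark \ref{remark1}),
\[
\sum_{k=1}^{\infty} P(A_k)\;\le\;\sum_{k=1}^{\infty} P(|X_1|>k)\;\le\;E|X_1|\;<\;\infty.
\]
By the first Borel--Cantelli lemma, $P(A_k\text{ i.o.})=0$, so almost surely there is a (random) index $N=N(\omega)$ such that $|X_k(\omega)|\le k$ for every $k\ge N$. Consequently the numerator in $R_n$ is eventually constant in $n$: for $n\ge N$,
\[
\sum_{k=1}^{n}|X_k|\,I\{|X_k|>k\}=\sum_{k=1}^{N-1}|X_k|\,I\{|X_k|>k\}=:M(\omega)<\infty \ \text{a.s.}
\]
Dividing by $n$ gives $R_n\le M(\omega)/n\to 0$ a.s., which concludes the argument.

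The only step that needs a word of care is the tail estimate $\sum_k P(|X_1|>k)\le E|X_1|$, which follows from the standard layer-cake identity $E|X_1|=\int_0^\infty P(|X_1|>x)\,dx$ together with monotonicity of $x\mapsto P(|X_1|>x)$. There is no genuine obstacle here; the point of the lemma is simply that truncation at level $k$ is harmless in Ces\`aro mean under a first-moment hypothesis, and the oscillatory factor $e^{ikt}$ is irrelevant because of the triangle inequality. Exactly the same proof yields the statement under the weaker hypothesis of Remark \ref{remark1}, since only $P(|X_k|>k)\le P(|X_1|>k)$ is used.
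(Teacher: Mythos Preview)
Your proposal is correct and follows essentially the same route as the paper: bound the difference by the $t$-free Ces\`aro sum of $|X_k-Y_k|$, use $\sum_k P(|X_k|>k)\le E|X_1|<\infty$ and Borel--Cantelli to see that only finitely many terms are nonzero, and conclude that a fixed finite sum divided by $n$ tends to zero. Your write-up is slightly more explicit (introducing $R_n$ and $M(\omega)$) and already anticipates the extension of Remark~\ref{remark1}, but there is no substantive difference in approach.
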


\begin{proof}
By the fact that the random variables have the same distribution, we obtain

$$\sum_{n=1}^{\infty} P(X_n\neq Y_n)=\sum_{n=1}^{\infty} P(|X_n|>n)\leq \int_{0}^{\infty} P(|X_1|>x)dx=E|X_1|<\infty.$$

By the Borel-Cantelli Lemma, we know $P(X_n\neq Y_n \ i.o.)=0$. That is, for almost all  $\omega\in\Omega$, $X_n(\omega)=Y_n(\omega)$, for all $n$ sufficiently large, say for all $n\geq m(\omega):=m$. Thus for all  $t\in[-\pi,\pi)$,

$$\left|\frac{1}{n}S_n(t)-\frac{1}{n}S_n^*(t)\right|=\frac{1}{n}\left|\sum_{k=0}^{n} e^{ikt}(X_k-Y_k)\right|\leq \frac{1}{n}\sum_{k=0}^{m}|e^{ikt}|\cdot |X_k-Y_k|=\frac{1}{n}\sum_{k=0}^{m}|X_k-Y_k|\rightarrow 0, \ \ \ P-\text{a.s.}$$

That is,

$$\lim_{n\to\infty}\left|\frac{1}{n}S_n(t)-\frac{1}{n}S_n^*(t)\right|=0, \ \ P-\text{a.s.}$$

\end{proof}

\subsection{Proof of Theorem \ref{theorem1}}
\begin{proof}
First, let us show that $\displaystyle \sum_{k=1}^{n} \frac{e^{ikt}Y_k}{k}$ converges a.s. 

From Durrett (page 64), we know that   $\displaystyle\sum_{k=1}^{\infty}\frac{E Y_k^2}{k^2}\leq 4E|X_1|<\infty$. 
So,  for almost all  $\omega\in\Omega$, $\displaystyle \sum_{k=1}^{\infty}\frac{Y_k^2(\omega)}{k^2}<\infty$. 

Then, by Carleson's Theorem (1966), 
for almost all $\omega\in\Omega$, $\displaystyle \sum_{k=1}^{n} \frac{e^{ikt} Y_k}{k}$ converges a.s. in $t$. That is, for almost all $\omega\in\Omega$,  there exists $I_\omega\subset [-\pi,\pi]$ with $\lambda(I_\omega)=2\pi$,  such that for all $t \in I_\omega$, $\displaystyle \sum_{k=1}^{n} \frac{e^{ikt} Y_k}{k}$ converges.\\

Let $\Omega_0=\{\omega: \displaystyle \sum_{k=1}^{\infty}\frac{Y_k^2(\omega)}{k^2}<\infty\}$, then $P(\Omega_0)=1$.

It is convenient to work on the product space $[-\pi,\pi)\times\Omega$ with product measure $\widetilde P:=\lambda\times P$ where $\lambda$ is the  Lebesgue measure on $[-\pi,\pi)$. $\text{Define} \ \ \ A=\{(w,t):\displaystyle \sum_{k=1}^{\infty} \frac{e^{ikt} Y_k}{k} \ \ \text{is convergent}\}\subset\Omega\times[-\pi,\pi)\}.$

Using Fubini Theorem,
\begin{align*}
\widetilde{P}(A)&=\int_{[-\pi,\pi]\times \Omega} I_A(\omega,t)d\widetilde{P}=\int_{\Omega} \int_{-\pi}^{\pi} I_A(\omega,t)\,d\lambda dP \\
&=\int_{\Omega_0} \int_{-\pi}^{\pi} I_A(\omega,t)\,d\lambda dP =\int_{\Omega_0} 2\pi dP \\
&=2\pi \\
&=\int_{-\pi}^{\pi} \int_{\Omega} I_A(\omega,t)\,dP d\lambda . 
\end{align*}
Thus, for almost all $t\in[-\pi,\pi)$, $\displaystyle \sum_{k=1}^{\infty} \frac{e^{ikt} Y_k}{k}$ converges almost surely in $\omega$.\\

Now by applying Kronecker Lemma (If $a_n\uparrow \infty$ and $\sum_{n=1}^{\infty} \left(x_n/a_n\right)$ converges, then $a_n^{-1}\sum_{m=1}^{n} x_m \rightarrow 0$), for almost all $t\in[-\pi,\pi)$, we obtain
 $$ \lim_{n\to\infty}\frac{1}{n}\sum_{k=1}^{n} e^{ikt} Y_k=\lim_{n\to\infty}\frac{1}{n}S_n^*(t)=0 \ \ \  P-a.s.$$ 
By Lemma \ref{lemma1}, it follows that for almost all $t\in[-\pi,\pi)$:\\
 $$\displaystyle \lim_{n\to\infty}\frac{1}{n}S_n(t)=\lim_{n\to\infty}\frac{1}{n}S_n^*(t)=0, \ \ \  P-a.s.$$

\end{proof}

\subsection{Proof of Theorem \ref{theorem2}}

\begin{proof}
Define the following variables:

$$X_k^{'}=e^{itk}X_kI\{|X_k|\leq n^{1/r}\}, \ \ X_k^{''}=e^{itk}X_kI\{|X_k|> n^{1/r}\}.$$

Clearly, $e^{itk}X_k=X_k^{'}+X_k^{''}$ and $\displaystyle S_n(t)=S_n^{'}(t)+S_n^{''}(t)$
where $\displaystyle S_n^{'}(t)=\sum_{k=1}^{n}X_k^{'}$ and $S_n^{''}=\sum_{k=1}^{n}X_k^{''}$.

By Markov's Inequality,
\begin{equation}\label{eq3}
P\left[\max_{1\leq k\leq n}|S_k^{'}(t)|>\epsilon n^{1/r}\right] \leq \frac{1}{\epsilon^2}n^{-2/r} E\left[\left(\max_{1\leq k\leq n}|S_k^{'}(t)|\right)^2\right], 
\end{equation}

and by the maximal inequality in Hunt and  Young (1974),
\begin{equation}\label{eq4}
\int_{-\pi}^{\pi}\max_{1\leq k\leq n}|S_k^{'}(t)|^2\lambda(dt)\leq C\sum_{k=1}^{n}|X_k^{'}|^2.
\end{equation}

Using Fubini's Theorem and properties (\ref{eq3}) and (\ref{eq4}), we obtain:

\begin{align*}
\widetilde P\left[\max_{1\leq k\leq n}|S_k^{'}(t)|>\epsilon n^{1/r}\right]
&=\int_{-\pi}^{\pi} P\left[\max_{1\leq k\leq n}|S_k^{'}(t)|>\epsilon n^{1/r}\right]\lambda(dt)\\
&\leq  \frac{1}{\epsilon^2}n^{-2/r} \int_{-\pi}^{\pi} E\left[\left(\max_{1\leq k\leq n}|S_k^{'}(t)|\right)^2\right] \lambda(dt)\\
&=  \frac{1}{\epsilon^2}n^{-2/r} E\left[ \int_{-\pi}^{\pi} \left(\max_{1\leq k\leq n}|S_k^{'}(t)|\right)^2 \lambda(dt)\right] \\
&\leq \frac{C}{\epsilon^2}n^{-2/r}E\left[\sum_{k=1}^{n}|X_k^{'}|^2\right]\\
&= \frac{C}{\epsilon^2}n^{-2/r}\sum_{k=1}^{n} E\left[(X_k)^2I(|X_k|\leq n^{1/r})\right].
\end{align*}

By George Stoica (2011) (page 912), we can get:
\begin{equation*}
\sum_{n=1}^{\infty} n^{p/r-2/r-2} \sum_{k=1}^{n} E\left[(X_k)^2I(|X_k|\leq n^{1/r})\right] \leq C \sup_{k\geq 1}E|X_k|^p<\infty,
\end{equation*}
Combing this result with our computation, we obtain:

\begin{equation}\label{eq5}
\sum_{n=1}^{\infty} n^{p/r-2} \widetilde P\left[\max_{1\leq k\leq n}|S_k^{'}(t)|>\epsilon n^{1/r}\right]<\infty.
\end{equation}

Again, by Markov's inequality, 
\begin{align*}
P\left[\max_{1\leq k\leq n}|S_k^{''}(t)|>\epsilon n^{1/r}\right] &\leq \frac{1}{\epsilon}n^{-1/r} E\left[\max_{1\leq k\leq n}|S_k^{''}(t)|\right]\\
&\leq \frac{1}{\epsilon}n^{-1/r}  E\left[\sum_{k=1}^n|X_k^{''}|\right]\\
&=\frac{1}{\epsilon}n^{-1/r}  \sum_{k=1}^n E\left[|X_k^{''}|\right]\\
&=\frac{1}{\epsilon}n^{-1/r}  \sum_{k=1}^n E\left[|X_k|I\{|X_k|>n^{1/r}\}\right].
\end{align*}

By George Stoica (2011) (page 912), we can get:
\begin{equation*}
\sum_{n=1}^{\infty} n^{p/r-1/r-2} \sum_{k=1}^{n} E\left[|X_k|I(|X_k|\leq n^{1/r})\right] \leq C \sup_{k\geq 1}E|X_k|^p<\infty.
\end{equation*}

Combining this result with our computation, we obtain:

\begin{equation}\label{eq6}
\sum_{n=1}^{\infty} n^{p/r-2}  P\left[\max_{1\leq k\leq n}|S_k^{''}(t)|>\epsilon n^{1/r}\right]
\leq C E|X_k|^p<\infty.
\end{equation}

Using Fubini's Theorem and  relation (\ref{eq6})
\begin{equation}\label{eq7}
\begin{split}
\sum_{n=1}^{\infty}n^{p/r-2} \widetilde P\left[\max_{1\leq k\leq n}|S_k^{''}(t)|>\epsilon n^{1/r}\right]
&=\sum_{n=1}^{\infty}n^{p/r-2} \int_{-\pi}^{\pi} P\left[\max_{1\leq k\leq n}|S_k^{''}(t)|>\epsilon n^{1/r}\right]\lambda(dt)\\
&= \int_{-\pi}^{\pi} \sum_{n=1}^{\infty}n^{p/r-2} P\left[\max_{1\leq k\leq n}|S_k^{''}(t)|>\epsilon n^{1/r}\right]\lambda(dt)\\
&\leq 2\pi C E|X_k|^p<\infty.
\end{split}
\end{equation}

Combining (\ref{eq5}) and (\ref{eq7}), we get:
\begin{align*}
& \sum_{n=1}^{\infty}n^{p/r-2} \widetilde P[\max_{1\leq k\leq n}|S_k(t)|>\epsilon n^{1/r}]\\
&\leq \sum_{n=1}^{\infty}n^{p/r-2} \widetilde P[\max_{1\leq k\leq n}|S_k^{'}(t)|>\frac{\epsilon}{2} n^{1/r}]+\sum_{n=1}^{\infty}n^{p/r-2} \widetilde P[\max_{1\leq k\leq n}|S_k^{''}(t)|>\frac{\epsilon}{2} n^{1/r}]<\infty.
\end{align*}

By Fubini's Theorem, we have:
 \begin{align*}
 &\sum_{n=1}^{\infty}n^{p/r-2} \widetilde P[\max_{1\leq k\leq n}|S_k(t)|>\epsilon n^{1/r}]\\
& =\sum_{n=1}^{\infty}n^{p/r-2} \int_{-\pi}^{\pi}\int_{\Omega}I\{\max_{1\leq k\leq n}|S_k(t)|>\epsilon n^{1/r}\}dP\lambda(dt)\\
&=\int_{-\pi}^{\pi} \sum_{n=1}^{\infty}n^{p/r-2} \int_{\Omega}I\{\max_{1\leq k\leq n}|S_k(t)|>\epsilon n^{1/r}\}dP\lambda(dt)\\
&=\int_{-\pi}^{\pi} \sum_{n=1}^{\infty}n^{p/r-2} P\left(\max_{1\leq k\leq n}|S_k(t)|>\epsilon n^{1/r}\right)\lambda(dt)<\infty.
\end{align*}

Thus, for almost all $t\in[-\pi,\pi)$,

$$\sum_{n=1}^{\infty}n^{p/r-2} P\left(\max_{1\leq k\leq n}|S_k(t)|>\epsilon n^{1/r}\right)<\infty.$$

\end{proof}

\subsection{Proof of Corolary \ref{cor1}}
\begin{proof}

By Theorem \ref{theorem2}, when $1< r=p< 2$, we have: for almost all $t\in [-\pi,\pi)$,

$$\sum_{n=1}^{\infty}n^{-1} P\left(\max_{1\leq k\leq n}|S_k(t)|>\epsilon n^{1/p}\right)<\infty,$$
which is equivalent to 
$$\sum_{N=1}^{\infty} P\left(\max_{1\leq k\leq 2^N}|S_k(t)|>\epsilon 2^{N/p}\right)<\infty.$$

Then, by Remark 1 in Dedecker and Merlevede (2006), for almost all $t\in [-\pi,\pi)$, 
$$S_n(t)/n^{1/p}\rightarrow 0, \ \ \ P-\text{a.s.} \ \ $$
\end{proof}

\section{Appendix: Details about Remark \ref{remark1}}

\begin{proof}
First, we show that Theorem \ref{theorem1} is true.

Here we use the same truncation as before, i.e.  $Y_k=X_kI\{|X_k|\leq k\}$.

As $P(|X_n|\geq x) \leq P(|X_1|\geq x)$ and $E|X_1|<\infty$, then

$$E|X_n|=\int_{0}^{\infty} P(|X_n|\geq x)dx \leq\int_{0}^{\infty} P(|X_1|\geq x)dx=E|X_1|<\infty .$$

$$\sum_{n=1}^{\infty} P(X_n\neq Y_n)=\sum_{n=1}^{\infty} P(|X_n|>n)\leq\sum_{n=1}^{\infty}P(|X_1|>n)\leq \int_{0}^{\infty} P(|X_1|>x)dx=E|X_1|<\infty.$$

So we still have Lemma \ref{lemma1}. Then the proof of Theorem \ref{theorem1} works.

The proof of Theorem \ref{theorem2} still works under the assumption of Remark \ref{remark1} and Theorem \ref{theorem2} implies Corollary \ref{cor1}.

\end{proof}

\section{Acknowledgement}
This paper is partially supported by the NSF grant, DMS-1512936.


\begin{thebibliography}{99}

\bibitem{BK1965} Baum, L.E. and Katz, M., Convergence rates in the law of large numbers, Trans.Amer.Math.Soc. 120 (1965), pp. 108-123.

\bibitem{Billingsley} Billingsley, P., Probability and Measure, Anniversary ed., Wiley, 2012.

\bibitem{Carleson1966} Carleson, L., On convergence and growth of partial sums of Fourier series, Acta Mathematica, 116?1966?, no. 1, pp. 135-157.
 
\bibitem{DM2008} Dedecker, J. and Merlevede, F., Convergence rates in the law of large numbers for Banach-valued dependent variables, Theory Probab. Appl. 52 (2008), pp. 416-438.

\bibitem{Durrett4th} Durrett, R., Probability: Theory and Examples, 4th ed., Cambridge, NewYork, 2013.

\bibitem{Hunt-Young1974} Hunt, R.A. and Young, W.S., A weighted norm inequality for Fourier Series, Bull.Amer.Math. 80 (1974), pp. 274-277.

\bibitem{Korchevsky2015} Korchevsky, V., Marcinkiewicz-Zygmund Strong law of large numbers for pairwise i.i.d random variables, Aug. 2015, arXiv: 1404.7454 [math.PR].

\bibitem{MZ1937} Marcinkiewicz, J. and Zygmund,  A., Sur les fonctions independantes, Fund.Math. 29 (1937), pp. 60-90. 


\bibitem{Stoica2011} Stoica, G. , A note on the rate of convergence in the strong law of large numbers for martingales, J.Math. Anal. Appl. 381 (2011), pp. 910-913.



\end{thebibliography}
\end{document}